\newtheorem{theorem}{Theorem}[section]
\newtheorem{lemma}[theorem]{Lemma}
\newtheorem{proposition}[theorem]{Proposition}
\newtheorem{corollary}[theorem]{Corollary}
\theoremstyle{definition}
\newtheorem{example}[theorem]{Example}
\newtheorem{question}[theorem]{Question}
\newtheorem{remark}[theorem]{Remark}
\begin{document}


\title[Maps from 3-manifolds to 4-manifolds]
{Maps from 3-manifolds to 4-manifolds \\ that induce isomorphisms on $\pi_1$}

\author{Hongbin Sun}
\address{Department of Mathematics, Rutgers University - New Brunswick, Hill Center, Busch Campus, Piscataway, NJ 08854, USA}
\email{hongbin.sun@rutgers.edu}

\author{Zhongzi Wang}
\address{Department of Mathematics Science, Tsinghua University, Beijing, 100080, CHINA}
\email{wangzz18@mails.tsinghua.edu.cn}


\subjclass[2010]{Primary 57M05; Secondary 57N10, 57N13, 20J06}

\keywords{3-manifolds, 4-manifolds, maps, fundamental groups}

\thanks{The first author is partially supported by Simons Collaboration Grant 615229.}

\begin{abstract}
In this paper, we prove that if a closed orientable $3$-manifold $M$ admits a map $f:M\to N$ to a closed orientable $4$-manifold $N$ such that $f$ induces an isomorphism on fundamental groups, then
$M$ is homeomorphic to $\#^k S^1\times S^2$ or $S^3$.
Relevant results on higher dimensional manifolds are also obtained.
\end{abstract}

\date{} 
\maketitle
\tableofcontents
\section{Introduction}
All manifolds in this note are compact, connected and orientable unless otherwise stated.
We first state the following known result.

\begin{theorem}\label{basic}
\begin{enumerate}
\item For $n=1,2$, there is no closed $n$-manifold $M$ that bounds an $(n+1)$-manifold $N$ such that 
the inclusion $i: M\to N$ induces an isomorphism on their fundamental groups, unless $M$ is the 2-sphere.

\item For any $n\ge 4$ and any finitely presented group $G$, 
there is a closed $n$-manifold  $M$ that bounds an $(n+1)$-manifold $N$ such that $\pi_1(M)\cong G$ and
the inclusion $i: M\to N$ induces an isomorphism on their fundamental groups.

\item For $n=3$, there is no closed $3$-manifold $M$ that bounds a $4$-manifold $N$ such that the inclusion $i: M\to N$ induces an isomorphism on their fundamental groups, 
unless $M$ is the 3-sphere or a connected sum of $S^1\times S^2$.
\end{enumerate}
\end{theorem}

For Theorem \ref{basic} (1), the $n=1$ case follows from the classification of 1-manifolds and 2-manifolds, and the $n=2$ case follows from a classical result on 3-manifolds
\cite[Theorem 10.2]{He}. 
Theorem \ref{basic} (2)
follows  from the famous construction  that any finitely presented group $G$ can be 
realized as the fundamental group of some closed $n$-manifold 
for any $n\ge 4$.
Theorem \ref{basic} (3) is proved by Daverman in \cite{Da}.

The main result of this paper is the following theorem.

\begin{theorem}\label{3-4map}
Let $M$ be a closed orientable $3$-manifold. There exists a map $f:M\to N$ to a closed orientable $4$-manifold $N$ such that $f_*:\pi_1(M)\to \pi_1(N)$ is an isomorphism if and only if $M$ is homeomorphic to $\#^k S^1\times S^2$ for some $k\in \mathbb{Z}_{\geq 0}$.
\end{theorem}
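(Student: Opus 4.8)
The plan is to prove the two implications separately, with the forward (``only if'') direction carrying essentially all of the content. For the ``if'' direction I would realize $\#^k S^1\times S^2$ as the boundary of $W=\natural^k S^1\times D^3$ as in the Example, and take $N$ to be the double $DW=W\cup_M W\cong\#^k S^1\times S^3$. The inclusion $M=\partial W\hookrightarrow DW$ factors through $W$, and since $\pi_1(M)\to\pi_1(W)$ is already an isomorphism (both are $F_k$), van Kampen gives $\pi_1(DW)=\pi_1(W)\ast_{\pi_1(M)}\pi_1(W)\cong F_k$ and shows the inclusion induces an isomorphism on $\pi_1$. This is precisely the doubling construction indicated in the remark preceding the theorem.

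For the ``only if'' direction, set $\pi=\pi_1(M)\cong\pi_1(N)$ and let $c_M:M\to B\pi$ and $c_N:N\to B\pi$ be classifying maps; choosing $c_N$ so that $(c_N)_*\circ f_*=\operatorname{id}_\pi$, asphericity of $B\pi$ gives $c_N\circ f\simeq c_M$, hence $(c_M)_*[M]=(c_N)_*f_*[M]$ in $H_3(\pi;\mathbb Z)$. The heart of the argument is to show that the mere existence of $f$ forces the fundamental-class image $(c_M)_*[M]$ to vanish. Since $f_*[M]\in H_3(N;\mathbb Z)$ and $N$ is a closed orientable $4$-manifold, Poincaré duality lets me write $f_*[M]=[N]\cap c_N^*u_0$ for a unique $u_0\in H^1(N;\mathbb Z)=H^1(\pi;\mathbb Z)$, using that first cohomology depends only on $\pi_1$ and that $c_N^*$ is an isomorphism on $H^1$. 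The projection formula then yields the key identity $(c_M)_*[M]=(c_N)_*\big([N]\cap c_N^*u_0\big)=\big((c_N)_*[N]\big)\cap u_0$, where $(c_N)_*[N]\in H_4(\pi;\mathbb Z)$.

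It then remains to see that this cap product is zero, and here I would invoke the prime decomposition $M=P_1\#\cdots\#P_m$, giving $\pi=G_1\ast\cdots\ast G_m$ with each $G_i=\pi_1(P_i)$ either finite, infinite cyclic, or the (aspherical) fundamental group of an aspherical prime. Taking $B\pi\simeq\bigvee_i BG_i$, both $(c_N)_*[N]$ and $u_0$ split along the wedge, and cap products of classes supported on distinct wedge summands vanish by naturality of the cap product. On each diagonal summand the contribution dies for a structural reason: if $G_i$ is finite then $H^1(G_i;\mathbb Z)=\operatorname{Hom}(G_i,\mathbb Z)=0$, while if $G_i$ is infinite cyclic or aspherical then $BG_i$ has dimension $\le 3$, so $H_4(G_i;\mathbb Z)=0$. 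Either way the relevant factor is trivial, so $(c_N)_*[N]\cap u_0=0$ and hence $(c_M)_*[M]=0$.

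Finally I would convert this vanishing into the stated conclusion via the characterization that $(c_M)_*[M]=0$ holds if and only if $M\cong\#^k S^1\times S^2$. Using the same decomposition, $(c_M)_*[M]=\sum_i(\iota_i)_*(c_{P_i})_*[P_i]$ lands in $\bigoplus_i H_3(G_i;\mathbb Z)$, so it vanishes iff every $(c_{P_i})_*[P_i]=0$; but for an aspherical prime $c_{P_i}$ is a homotopy equivalence carrying $[P_i]$ to a generator of $H_3(G_i;\mathbb Z)\cong\mathbb Z$, and for a spherical prime the universal cover is $S^3$ (Poincaré conjecture), making $c_{P_i}$ $3$-connected so that $(c_{P_i})_*[P_i]$ generates $H_3(G_i;\mathbb Z)\cong\mathbb Z/|G_i|$. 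Thus a nonzero contribution occurs exactly for the prime factors that are neither $S^1\times S^2$ nor $S^3$, and their absence is precisely the condition $M\cong\#^k S^1\times S^2$. I expect the main obstacle to be the conceptual step of recognizing that the existence of $f$ collapses to the single fundamental-class obstruction $(c_M)_*[M]=0$ through Poincaré duality on $N$; the remaining work is the cap-product bookkeeping across free factors together with the input from geometrization (asphericity of the infinite prime pieces and the spherical space form computation) needed to make the final characterization rigorous.
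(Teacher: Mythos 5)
Your proof is correct, and it takes a genuinely different---and more unified---route than the paper's. The paper proves the ``only if'' direction by a three-case analysis of the prime decomposition: an aspherical summand is handled with integral cup products (Proposition \ref{aspherical}), a spherical summand whose group order has an odd prime factor $p$ with $\mathbb{Z}_p$-coefficient cup products (Proposition \ref{finiteandZ}), and the remaining case, where all finite factors are $2$-groups, by a Kurosh-theorem/finite-covering reduction to connected sums of $RP^3$'s and $S^1\times S^2$'s plus a separate $RP^{\infty}$ argument (Propositions \ref{RP3andZ} and \ref{2powerandZ}). That case division is forced by the paper's cohomological setup: the pairing $H^3(N;\mathbb{Z})\times H^1(N;\mathbb{Z})\to H^4(N;\mathbb{Z})\cong\mathbb{Z}$ detects only infinite-order classes, so torsion fundamental-class images (purely spherical summands) require field coefficients, and mod $2$ the key identity $\beta\cup\beta=0$ fails, necessitating the special $RP^{\infty}$ argument. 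Your argument is homological and sidesteps all of this at once: $H_3(N;\mathbb{Z})\cong H^1(N;\mathbb{Z})$ is torsion-free, every class there has the form $[N]\cap c_N^*u_0$ with $u_0\in H^1(\pi;\mathbb{Z})$, and the projection formula gives $(c_M)_*[M]=\bigl((c_N)_*[N]\bigr)\cap u_0$, which vanishes---either because $H_4(K(\pi,1);\mathbb{Z})=0$, or by your wedge-by-wedge verification using $H^1(G_i;\mathbb{Z})=0$ for finite factors and $H_4(BG_i;\mathbb{Z})=0$ for the infinite ones, which is a correct substitute. This contradicts the nontriviality of $(c_M)_*[M]$ for non-free $\pi$, which is exactly the content of Proposition \ref{kpi1}(1) (and your last paragraph re-derives it by the same cell-attaching computation, with the same geometrization/Poincar\'e conjecture input as Theorem \ref{known}). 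So you reuse the paper's foundational computation (Proposition \ref{kpi1}) but replace its three case-specific arguments, the $\mathbb{Z}_p$ computations of Proposition \ref{cohomologyKpi1}, and the Kurosh/covering detour of Proposition \ref{2powerandZ} with a single cap-product identity; the trade-off is only that the paper's longer route establishes auxiliary results of independent interest, such as the $\mathbb{Z}_p$-cohomology structure of spherical space-form groups. Your ``if'' direction (doubling $\natural^k S^1\times D^3$) is the same as the paper's.
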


Theorem \ref{3-4map} is  related to Theorem \ref{basic} (3).  If $M$ is a closed connected orientable $3$-manifold that bounds a compact orientable $4$-manifold $N$ such that the inclusion induces an isomorphism on $\pi_1$, then the natural embedding from $M$ to the double of $N$ induces an isomorphism on $\pi_1$.

Since $\#^k S^1\times S^2$ bounds $\natural^k S^1\times D^3$ and the inclusion induces an isomorphism on $\pi_1$, there do exist maps from $M^3=\#^k S^1\times S^2$ to $\#^kS^1\times S^3$ that induce isomorphisms on $\pi_1$, via the doubling construction. 


The proof of Theorem \ref{3-4map} divides into the following two cases, and in each case we prove that a map $f:M\to N$ that induces an isomorphism on $\pi_1$ does not exist.
\begin{enumerate}
\item $M=U\#V$ such that $U$ is an aspherical $3$-manifold.
\item $M=(\#_{j=1}^n N_j)\# (\#^k S^1\times S^2)$ with $n\geq 1$, such that $|\pi_1(N_j)|$ is finite and nontrivial for all $j$.
\end{enumerate}
These two cases will be proved in Propositions \ref{aspherical} and \ref{2powerandZ} respectively. 

Note that in Theorem \ref{3-4map}, we cannot drop the orientability condition on the $4$-manifold $N$. For example, the natural embedding from $RP^3$ to $RP^4$ induces an isomorphism on $\pi_1$.

Here is an interesting consequence of the proof of the first case of Theorem \ref{3-4map}.


\begin{corollary}\label{3-3}
Let $M$ be a closed orientable $3$-manifold whose fundamental group is not free, and let $f:M\to M$ be a map that induces an isomorphism on the fundamental group. Then $f$ is a non-zero degree map.
\end{corollary}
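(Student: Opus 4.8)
The plan is to argue by contradiction and reduce to Theorem \ref{3bound4}. Suppose $f\colon M\to M$ induces an isomorphism on $\pi_1$ but $\deg f=0$; I will produce a compact orientable $4$-manifold bounded by $M$ whose inclusion is a $\pi_1$-isomorphism, which by Theorem \ref{3bound4} forces $M=\#^k S^1\times S^2$ and hence $\pi_1(M)$ free, contrary to the hypothesis that $\pi_1(M)$ is not free.

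First I would convert the degree hypothesis into a bordism statement. The pair $(M,f)$ defines a class $[M,f]$ in the oriented bordism group $\Omega_3(M)$, and since $\Omega_q(\mathrm{pt})=0$ for $q=1,2,3$, the Atiyah--Hirzebruch spectral sequence gives an isomorphism $\Omega_3(M)\cong H_3(M;\mathbb{Z})$ under which $[M,f]\mapsto f_*[M]=\deg(f)\,[M]$. As $\deg f=0$, the class $[M,f]$ vanishes, so there is a compact orientable $4$-manifold $W$ with $\partial W=M$ together with a map $g\colon W\to M$ whose restriction to $\partial W$ is $f$. Discarding closed components, I may take $W$ connected. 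Writing $i\colon M=\partial W\hookrightarrow W$ for the inclusion, the relation $g_*\circ i_*=f_*$ shows that $i_*$ is injective and that $g_*\colon\pi_1(W)\to\pi_1(M)$ is surjective.

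Next I would upgrade $g$ so as to make $i_*$ an isomorphism, using surgery in the interior of $W$. Since $\pi_1(W)$ is finitely generated and $\pi_1(M)$ is finitely presented, the kernel of the surjection $g_*$ is the normal closure of finitely many elements $\gamma_1,\dots,\gamma_s$. I would represent these by disjoint embedded framed circles in $\mathrm{int}\,W$ (a circle has codimension $3$, so the embeddings and framings are unobstructed, and deleting the circles does not change $\pi_1$). Surgering along the $\gamma_j$ replaces $W$ by a compact orientable $W'$ with $\partial W'=M$ and $\pi_1(W')=\pi_1(W)/\langle\langle\gamma_1,\dots,\gamma_s\rangle\rangle$; because each $g(\gamma_j)$ is null-homotopic in $M$, the map $g$ extends across the surgery traces to $g'\colon W'\to M$. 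Now $g'_*$ is an isomorphism, and from $g'_*\circ i'_*=f_*$ it follows that $i'_*\colon\pi_1(M)\to\pi_1(W')$ is an isomorphism. Theorem \ref{3bound4} then yields $M=\#^k S^1\times S^2$, whose fundamental group is free, the desired contradiction; hence $\deg f\ne 0$.

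The hard part will be the surgery step: one must verify that $\ker g_*$ is normally finitely generated (this is where finite presentability of $\pi_1(M)$ is used), that the chosen generators are realized by embedded framed circles whose surgery kills exactly the intended normal subgroup while leaving $\partial W$ untouched, and that $g$ genuinely extends over the attached handles. Once these points are checked, the reduction to Theorem \ref{3bound4} is immediate, so essentially the whole argument rests on organizing this interior surgery cleanly.
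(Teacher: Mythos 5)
Your proof is correct, but it takes a genuinely different --- and much heavier --- route than the paper's. The paper's proof is essentially one line: applying Proposition \ref{general} with $X=M$, any self-map of a closed orientable $3$-manifold with non-free $\pi_1$ that induces a $\pi_1$-isomorphism must be nontrivial on $H_3(M;\mathbb{Z})\cong\mathbb{Z}$, and that is precisely the statement $\deg f\neq 0$. You instead reduce Corollary \ref{3-3} to Theorem \ref{3bound4}: from $\deg f=0$ and the Thom isomorphism $\Omega_3(M)\cong H_3(M;\mathbb{Z})$ you produce a null-bordism $(W,g)$ of $(M,f)$, and then surger the interior of $W$ along framed embedded circles normally generating $\ker\bigl(g_*:\pi_1(W)\to\pi_1(M)\bigr)$ to make the boundary inclusion a $\pi_1$-isomorphism, contradicting Theorem \ref{3bound4}. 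The three points you flag as needing care are all standard and do go through: a surjection from a finitely generated group onto a finitely presented group has normally finitely generated kernel (B.~H.~Neumann's lemma, with $\pi_1(W)$ finitely presented since $W$ is compact); circles in the interior of an orientable $4$-manifold embed disjointly with trivial normal bundle since the codimension is $3$; and $g$ extends over each handle $D^2\times S^2$ because $g(\gamma_j)$ is null-homotopic, so after homotoping $g$ to factor through the projection on the tubular neighborhood one caps off with a null-homotopy. There is also no circularity, since Theorem \ref{3bound4} is proved in the paper before and independently of Corollary \ref{3-3}. As for what each approach buys: the paper's argument is shorter, more elementary (no bordism theory, no surgery), and gives slightly more information, namely the nontriviality (and, with an aspherical summand, injectivity) of $f_*$ on $H_3$; your argument shows the structurally interesting fact that Corollary \ref{3-3} is a \emph{formal} consequence of Theorem \ref{3bound4} used as a black box, so anyone who knows the boundary statement can deduce the self-map statement without reopening the $K(\pi,1)$ analysis of Proposition \ref{kpi1}. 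The irony is that in the paper the two results are siblings, each deduced in a line from the same Proposition \ref{general}, so your route is a long detour --- but a valid and instructive one.
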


Corollary \ref{3-3} was known for prime 3-manifolds. It is a classical fact when $M$ is aspherical, and it is also verified for 
3-manifolds with finite fundamental groups in some papers, say \cite{Ol} and \cite{HKWZ}.




Proposition \ref{aspherical} (on $3$-manifolds with aspherical prime factors) actually
holds for any dimension $n$ if we replace ''map" by "embedding''. For prime decomposition of $n$-manifolds, see \cite{BCFHKLN}.  

\begin{theorem}\label{n}
Suppose $N^{n+1}$ is a compact orientable manifold of dimension $n+1$ such that $\partial N$ is either empty or connected,
and $M^n$ is a closed orientable manifold of dimension $n$.\\
If $M^n$ contains an aspherical prime factor, then there is no embedding  $i: M \to N$ that induces
an isomorphism $i_* : \pi_1(M) \to \pi_1(N)$.\\
In particular, $M^n$ does not bound a compact orientable $(n+1)$-manifold such that the inclusion induces an isomorphism on $\pi_1$.
\end{theorem}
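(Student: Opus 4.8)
The plan is to argue by contradiction, extracting from the aspherical prime factor a cohomology class of $N$ that restricts nontrivially to $M$, and then contradicting this by showing that a $\pi_1$-surjective embedded hypersurface must be null-homologous. First I would fix notation: since $M$ contains an aspherical prime factor, write $M=U\# V$ with $U$ a closed orientable aspherical $n$-manifold (with $V$ possibly $S^n$). Being aspherical, $U=K(\pi_1 U,1)$, and orientability gives $H^n(U;\mathbb{Z})\cong\mathbb{Z}$; let $u$ be the generator with $\langle u,[U]\rangle=1$. The collapse map $c:M\to U$ has degree one and induces on $\pi_1$ the retraction $r:\pi_1(M)=\pi_1(U)*\pi_1(V)\to\pi_1(U)$, so $\langle c^*u,[M]\rangle=\langle u,c_*[M]\rangle=\pm1$; that is, $c^*u$ is a fundamental cohomology class of $M$. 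Now suppose for contradiction that $i:M\to N$ is an embedding with $i_*$ an isomorphism. Since $M$ and $N$ are orientable, the normal line bundle of $i(M)$ is trivial, so $M$ is two-sided, and after an isotopy I may assume $i(M)\subset\operatorname{int}N$ (the case $i(M)=\partial N$ will be handled identically at the end).

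Next I would transport the fundamental class to $N$. Define $\phi=r\circ(i_*)^{-1}:\pi_1(N)\to\pi_1(U)$; because $U$ is aspherical, $\phi$ is realized by a map $g:N\to U$ with $g_*=\phi$, unique up to homotopy. Then $g\circ i$ induces $r$ on $\pi_1$, the same as $c$, so $g\circ i\simeq c$ (again by asphericity of $U$). Setting $w=g^*u\in H^n(N;\mathbb{Z})$, naturality gives $i^*w=(g\circ i)^*u=c^*u$, whence $\langle w,i_*[M]\rangle=\langle i^*w,[M]\rangle=\pm1$. In particular $i_*[M]\neq0$ in $H_n(N;\mathbb{Z})$.

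The remaining step is to show that in fact $i_*[M]=0$, yielding the contradiction. Here I would first observe that surjectivity of $i_*$ forces $M$ to separate $N$: every loop in $N$ is freely homotopic into $i(M)$, hence can be pushed off $M$ through the two-sided collar and so has trivial mod-$2$ intersection number with $M$; a non-separating two-sided hypersurface, on the other hand, admits a loop meeting it once, so $M$ must separate. Writing $N\setminus M=N_1'\sqcup N_2'$, the hypothesis that $\partial N$ is empty or connected is exactly what guarantees that $\partial N$ lies in a single component, say $N_2'$, so that the closure $N_1$ of the other component is a compact manifold with $\partial N_1=M$. Then $i_*[M]=\partial[N_1,\partial N_1]\mapsto 0$ under $H_n(\partial N_1)\to H_n(N_1)\to H_n(N)$, i.e. $i_*[M]=0$, contradicting the previous paragraph. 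The boundary-parallel case $i(M)=\partial N$ is immediate, since $[\partial N]=0$ in $H_n(N;\mathbb{Z})$.

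I expect the main obstacle to be the separation step and its interaction with the boundary hypothesis, rather than the homotopy-theoretic part, which is routine once the fundamental class is interpreted through $K(\pi_1U,1)=U$. The connectivity of $\partial N$ is essential and must be used carefully: with several boundary components the argument would only show that $i_*[M]$ equals a combination of boundary fundamental classes, which need not vanish, matching the fact that the theorem can fail for disconnected $\partial N$. I would also take care that the mod-$2$ intersection-number argument is phrased as a homology pairing (so that it is invariant under the free homotopy provided by surjectivity of $i_*$), and that all orientations in the identification $\partial[N_1]=[M]$ are consistent so that the final equality $\pm1=0$ is genuinely a contradiction.
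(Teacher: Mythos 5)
Your proof is correct and takes essentially the same route as the paper's: both construct a map $g:N\to U$ to the aspherical factor realizing $p\circ i_*^{-1}$ via asphericity, identify $g\circ i$ with the degree-one collapse map (you phrase this as the pairing $\langle g^*u, i_*[M]\rangle=\pm1$ rather than as a degree computation), prove that $M$ separates $N$ by an intersection-number argument resting on surjectivity of $i_*$, and then use connectivity of $\partial N$ to conclude $i_*[M]=0$ in $H_n(N;\mathbb{Z})$, the desired contradiction. The remaining differences (mod-$2$ versus integral intersection numbers, and treating $i(M)=\partial N$ explicitly instead of only pushing $M$ into the interior) are cosmetic.
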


In Section 2, after reviewing basic results on 3-manifolds, we  study $\mathbb{Z}$-coefficient (co)homology of $3$-manifold groups, then use it to prove the first case of Theorem \ref{3-4map} (Proposition \ref{aspherical}). In Section 3, we use $\mathbb{Z}_p$-coefficient (co)homology of $3$-manifold groups to prove the second case of Theorem \ref{3-4map} (Proposition \ref{2powerandZ}).
The proof of Theorem   \ref{n}  will be presented in Section 4. 
Although maps from 3-manifolds to 4-manifolds realizing isomorphisms on $\pi_1$ are rare (Theorem \ref{3-4map}),
there are plenty of maps from 4-manifolds to 3-manifolds realizing  isomorphisms on $\pi_1$, as constructed in Section 5.

{\bf Acknowledgements.}
Theorem 1.2 is partly inspired by communications with  Yuguang Shi, Jiajun Wang and Shicheng Wang. 
 Yi Ni and Yang Su independently also made some progress toward Theorem 1.2. The authors thank the referee for many very helpful suggestions on simplifying the structure and the proofs of this paper.

\section{Maps from 3-manifolds to 4-manifolds and $\mathbb{Z}$-coefficient (co)homology of $3$-manifold groups}

At first, we need the following classical result on 3-manifold theory.

\begin{theorem}\label{known}
Each closed orientable 3-manifold $M$ other than $S^3$ has a prime decomposition 
$$M=(\#_{i=1}^mM_i)\#(\#_{j=1}^n N_j)\#(\#^kS^1\times S^2).$$ 
Here each $M_i$ is aspherical, each $N_j$ has finite and nontrivial fundamental group, indeed has $S^3$ as its universal cover. 
\end{theorem}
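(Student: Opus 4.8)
The plan is to assemble two classical pillars of 3-manifold topology: the Kneser--Milnor prime decomposition theorem and the geometric classification of prime orientable 3-manifolds. First I would invoke Kneser--Milnor: every closed orientable 3-manifold $M\neq S^3$ admits a decomposition $M=P_1\#\cdots\#P_r$ into prime factors, unique up to reordering and orientation-preserving homeomorphism. This simultaneously supplies the existence of a connected-sum decomposition and the asserted uniqueness, so that everything reduces to identifying the possible prime factors $P_i$ with the three listed types.

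Next I would recall that a prime closed orientable 3-manifold is either irreducible (every embedded 2-sphere bounds a ball) or is the orientable $S^2$-bundle over $S^1$, namely $S^2\times S^1$. Hence the non-irreducible prime factors contribute precisely the $\#^k S^2\times S^1$ summand, and it remains only to analyze the irreducible factors according to their fundamental group.

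For an irreducible closed orientable 3-manifold $P$ I would split into three cases. If $\pi_1(P)$ is trivial, the Poincar\'e conjecture forces $P\cong S^3$, which is discarded since it is the identity for $\#$. If $\pi_1(P)$ is finite and nontrivial, the Elliptization conjecture (both now theorems via Perelman's geometrization) gives $P\cong S^3/\Gamma$ for a finite nontrivial $\Gamma\le SO(4)$ acting freely; in particular $P$ has $S^3$ as universal cover, and these are the factors $N_j$. If $\pi_1(P)$ is infinite, I would apply the Sphere Theorem to deduce $\pi_2(P)=0$; since $\pi_1(P)$ is infinite the universal cover $\widetilde P$ is a noncompact simply connected 3-manifold with $\pi_2(\widetilde P)=\pi_2(P)=0$ and $H_3(\widetilde P)=0$, so by Hurewicz and Whitehead $\widetilde P$ is contractible and $P$ is aspherical; these are the factors $M_i$.

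The main input, rather than an obstacle to be overcome by hand, is that this trichotomy is not elementary: the trivial and finite fundamental group cases rest on the full strength of geometrization. The only genuinely hands-on step is the asphericity argument in the infinite-$\pi_1$ case, combining the Sphere Theorem with the homology computation on $\widetilde P$ above. Collecting the three kinds of factors yields the stated normal form $(\#_{i=1}^m M_i)\#(\#_{j=1}^n N_j)\#(\#^k S^2\times S^1)$, and its uniqueness is inherited verbatim from the Kneser--Milnor uniqueness statement.
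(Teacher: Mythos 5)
Your proposal is correct and follows essentially the same route as the paper: Kneser--Milnor for existence and uniqueness, the Sphere Theorem plus Hurewicz/Whitehead for asphericity of the infinite-$\pi_1$ irreducible factors, and Perelman's work for the finite-$\pi_1$ factors. The only cosmetic difference is that you invoke the elliptization theorem directly in the finite-$\pi_1$ case, whereas the paper first shows those factors are covered by homotopy $3$-spheres (again via the Sphere Theorem and Hurewicz/Whitehead) and then applies only the Poincar\'e conjecture.
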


The proof of this result follows from Kneser-Milnor's prime decomposition theorem and the sphere theorem (\cite[Chap. 3 and 4]{He}), the Whitehead theorem and the Hurewicz theorem in algebraic topology, and Perelman's confirmation of Thurston's geometrization of $3$-manifolds.





Most of the proofs in this section are based on the following result on $\mathbb{Z}$-coefficient (co)homology of $3$-manifold groups.

\begin{proposition}\label{kpi1}
Let $M$ be a closed orientable $3$-manifold whose fundamental group is not free. Let $\pi=\pi_1(M)$, let $i:M\to K(\pi,1)$ be the unique map (up to homotopy) that induces the identity homomorphism on fundamental group. Then the following statements hold.

(1) The homomorphism $$i_*:H_3(M;\mathbb{Z})\to H_3(K(\pi,1);\mathbb{Z})$$ is not trivial. Moreover, if $M$ has an aspherical prime summand, $i_*:H_3(M;\mathbb{Z})\to H_3(K(\pi,1);\mathbb{Z})$ is injective.

(2) $H_4(K(\pi,1);\mathbb{Z})=0$.
\end{proposition}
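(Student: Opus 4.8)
The plan is to reduce the computation of $H_\ast(K(\pi,1);\mathbb{Z})$ to the prime summands of $M$ via Theorem~\ref{known}, writing $M=(\#_{i=1}^m M_i)\#(\#_{j=1}^n N_j)\#(\#^k S^2\times S^1)$ and hence $\pi=(\ast_{i=1}^m\pi_1(M_i))\ast(\ast_{j=1}^n G_j)\ast F_k$, where $G_j=\pi_1(N_j)$ is finite and $F_k$ is free of rank $k$; the hypothesis that $\pi$ is not free is exactly the condition $m+n\ge 1$. The first step is the standard fact that a classifying space of a free product is a wedge of classifying spaces, $K(\pi,1)\simeq\bigl(\bigvee_{i} K(\pi_1(M_i),1)\bigr)\vee\bigl(\bigvee_{j} K(G_j,1)\bigr)\vee\bigl(\bigvee^k S^1\bigr)$. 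Since each $M_i$ is aspherical we may take $K(\pi_1(M_i),1)=M_i$, and $K(\mathbb{Z},1)=S^1$. Using that the reduced homology of a wedge splits as a direct sum, for $n\ge1$ we obtain $H_n(K(\pi,1);\mathbb{Z})\cong\bigoplus_i H_n(M_i;\mathbb{Z})\oplus\bigoplus_j H_n(G_j;\mathbb{Z})\oplus\bigoplus^k H_n(S^1;\mathbb{Z})$.

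For statement (2) I would compute $H_4$ of each wedge factor. The factors $M_i$ are closed $3$-manifolds and $S^1$ is $1$-dimensional, so they contribute nothing in degree $4$; thus $H_4(K(\pi,1);\mathbb{Z})\cong\bigoplus_j H_4(G_j;\mathbb{Z})$. Each $G_j$ acts freely on $S^3$, so it has periodic cohomology of period dividing $4$. Passing to Tate cohomology, $H_4(G_j;\mathbb{Z})\cong\widehat H^{-5}(G_j;\mathbb{Z})\cong\widehat H^{-1}(G_j;\mathbb{Z})=0$, where the middle isomorphism is periodicity (as $4$ is a multiple of the period) and $\widehat H^{-1}(G_j;\mathbb{Z})=0$ because $\mathbb{Z}$ is a trivial module. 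This yields $H_4(K(\pi,1);\mathbb{Z})=0$.

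For statement (1) the main point is to identify $i$ concretely. I would factor $i$ (up to homotopy) as $M\xrightarrow{q}\bigvee_\alpha P_\alpha\xrightarrow{\vee c_\alpha}K(\pi,1)$, where $P_\alpha$ runs over all prime summands, $q$ is the pinch map collapsing the connected-sum spheres (so that each composite $M\to P_\alpha$ is a degree-one pinch), and $c_\alpha\colon P_\alpha\to K(\pi_1(P_\alpha),1)$ is the classifying map. Both $i$ and this composite induce the identity on $\pi_1=\pi$, and since the target is aspherical they are homotopic. Because $q$ has degree one onto each summand, $q_\ast[M]=\sum_\alpha[P_\alpha]$, and therefore $i_\ast[M]=\sum_{i}(c_i)_\ast[M_i]+\sum_j(c_j)_\ast[N_j]$ (the $S^1$-factors contribute nothing to $H_3$). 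For an aspherical summand $c_i$ is the identity, so $(c_i)_\ast[M_i]=[M_i]$ generates the free summand $H_3(M_i;\mathbb{Z})=\mathbb{Z}$. Consequently, if $M$ has an aspherical prime summand then $i_\ast[M]$ has nonzero, hence infinite-order, free part, while the contributions $(c_j)_\ast[N_j]$ are torsion; this forces $i_\ast\colon\mathbb{Z}=H_3(M)\to H_3(K(\pi,1))$ to be injective. When $M$ has no aspherical summand, non-freeness gives $n\ge1$, and I would invoke the key lemma below to see that each $(c_j)_\ast[N_j]$ is nonzero in the summand $H_3(G_j;\mathbb{Z})$; as these lie in independent summands there is no cancellation and $i_\ast[M]\neq0$.

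The main obstacle is the key lemma: for a finite group $G$ acting freely on $S^3$ with quotient the spherical space form $N=S^3/G$, the classifying map $\phi\colon N\to K(G,1)$ sends $[N]$ to a generator of $H_3(G;\mathbb{Z})\cong\mathbb{Z}/|G|$. For cyclic $G$ this is the familiar statement that a lens space is the $3$-skeleton of the infinite lens space $K(\mathbb{Z}/m,1)$, so $[N]$ maps to a generator of $H_3=\mathbb{Z}/m$. In general I would give $S^3$ a free $G$-CW structure; its cellular chain complex is the beginning of a free $\mathbb{Z}G$-resolution of $\mathbb{Z}$, which extends by periodicity to a full resolution $F_\ast$ computing $H_\ast(G;\mathbb{Z})$. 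The map $\phi$ is realized on chains by the inclusion $C_\ast(S^3)\hookrightarrow F_\ast$, which is the identity in degree $3$, and tracing the fundamental class through $C_\ast(S^3)\otimes_{\mathbb{Z}G}\mathbb{Z}=C_\ast(N)\to F_\ast\otimes_{\mathbb{Z}G}\mathbb{Z}$ shows that $[N]$ generates $H_3(G;\mathbb{Z})$; the chain-level identity $[S^3]=|G|\cdot[N]$ pins down the order. Only the nonvanishing of $\phi_\ast[N]$ is needed for (1), but the generator statement is what the chain-level computation naturally produces. I expect the care required in setting up the periodic resolution and in identifying the equivariant fundamental class to be the most delicate part of the argument.
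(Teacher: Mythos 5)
Your proposal is correct, and its individual steps all hold up: the wedge model of $K(\pi,1)$ for a free product, the pinch-map factorization of $i$ (legitimate, since maps into an aspherical space are determined up to homotopy by the induced homomorphism on $\pi_1$), the key lemma that the classifying map of a spherical space form $N=S^3/G$ sends $[N]$ to a generator of $H_3(G;\mathbb{Z})\cong\mathbb{Z}/|G|\mathbb{Z}$, and the Tate-cohomology computation $H_4(G_j;\mathbb{Z})\cong\widehat H^{-5}(G_j;\mathbb{Z})\cong\widehat H^{-1}(G_j;\mathbb{Z})=0$. But the technical route is genuinely different from the paper's. The paper does not take the wedge model as an input; it \emph{constructs} $K(\pi,1)$ by attaching cells of dimension at least $3$ directly to $M$: first $3$-cells along the decomposition spheres and along $*\times S^2$ in each $S^1\times S^2$ summand, then a $4$-cell glued to each spherical summand $N_j$ by the $|\pi_1(N_j)|$-sheeted covering map $S^3\to N_j$, then cells of dimension at least $5$. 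With that model the map $i$ is literally an inclusion, so $i_*[M]$ is traced through cellular homology with no homotopy-theoretic identification needed, and one cellular computation --- $H_3(N_j\cup D^4;\mathbb{Z})\cong\mathbb{Z}/|\pi_1(N_j)|\mathbb{Z}$ with $[N_j]$ a generator, and $H_4(N_j\cup D^4;\mathbb{Z})=0$ because the attached $4$-cell has injective boundary map --- delivers both (1) and (2) simultaneously. Your two pieces of machinery are exactly the abstract counterparts of that construction: splicing the equivariant chain complex $C_0\to\mathbb{Z}\to C_3$ into a periodic resolution is algebraically the same operation as gluing $D^4$ along the $|G|$-fold cover, and Tate periodicity substitutes for the paper's elementary boundary-map observation. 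What your route buys is a clean, quotable key lemma that is slightly stronger than what the paper extracts (the image of $[N]$ is a \emph{generator} of $H_3(G;\mathbb{Z})$, not merely nonzero) plus a reduction to standard facts about groups with periodic cohomology; what the paper's route buys is a self-contained argument entirely within cellular homology, with no appeal to resolutions, Tate cohomology, or the classification of maps into aspherical spaces beyond the uniqueness of $i$ itself.
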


Here we only sketch the proof since it is well-known for experts. Suppose $M$ is expressed as in Theorem \ref{known}, then a model of $K(\pi,1)$ is 
$$(\vee_{i=1}^m M_i)\vee(\vee_{j=1}^nK(\pi_1(N_j),1))\vee (\vee^k S^1),$$
with $m+n\geq 1$. 
The map $i:M\to K(\pi,1)$ is obtained by pinching each decomposition sphere of $M$ to a point, and projecting each $S^1\times S^2$ prime factor to $S^1$. Proposition \ref{kpi1} (1) obviously holds if $m\geq 1$. If $m=0$, then $n\geq 1$, and Proposition \ref{kpi1} (1)  holds since $H_3(N_j;\mathbb{Z})\cong \mathbb{Z}\to H_3(K(\pi_1(N_j),1);\mathbb{Z})\cong \mathbb{Z}_{|\pi_1(N_j)|}$ is surjective. Proposition \ref{kpi1} (2) holds since $H_4(K(\pi_1(N_j),1);\mathbb{Z})=0$, via a construction of $K(\pi_1(N_j),1)$ by adding one $4$-cell and more cells of dimension at least $5$.

The following result is a useful consequence of Proposition \ref{kpi1}.

\begin{proposition}\label{general}
Let $M$ be a closed orientable $3$-manifold whose fundamental group is not free, and let $X$ be a cell-complex. If $f:M\to X$ is a map that induces an isomorphism on fundamental groups, then the homomorphism $$f_*:H_3(M;\mathbb{Z})\to H_3(X;\mathbb{Z})$$ is not trivial. Moreover, if $M$ has an aspherical prime summand, then $f_*:H_3(M;\mathbb{Z})\to H_3(X;\mathbb{Z})$ is injective.
\end{proposition}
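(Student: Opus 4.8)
The plan is to factor $f$ through the Eilenberg--MacLane space $K(\pi,1)$ and then quote Proposition \ref{kpi1}. Write $\pi=\pi_1(M)$. The crucial input is a purely homotopy-theoretic fact: for any CW complex $X$ there is a map $g\colon X\to K(\pi_1(X),1)$ inducing an isomorphism on $\pi_1$, obtained by attaching cells of dimension $\geq 3$ to $X$ to kill all higher homotopy groups, and such a map is unique up to homotopy among maps realizing a prescribed isomorphism on $\pi_1$. This is exactly the kind of classifying map used in Proposition \ref{kpi1}, now applied to $X$ instead of $M$.

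First I would use the hypothesis that $f_*\colon\pi_1(M)\to\pi_1(X)$ is an isomorphism to identify $\pi_1(X)$ with $\pi$. Concretely, the canonical map $X\to K(\pi_1(X),1)$ composed with the homotopy equivalence $K(\pi_1(X),1)\to K(\pi,1)$ induced by $f_*^{-1}$ yields a map $g\colon X\to K(\pi,1)$ inducing $f_*^{-1}$ on $\pi_1$. Then the composite $g\circ f\colon M\to K(\pi,1)$ induces $f_*^{-1}\circ f_*=\mathrm{id}_\pi$ on fundamental groups, so by the uniqueness clause above it is homotopic to the canonical map $i\colon M\to K(\pi,1)$ from Proposition \ref{kpi1}.

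Passing to third homology then gives the factorization $i_*=g_*\circ f_*\colon H_3(M;\mathbb{Z})\to H_3(K(\pi,1);\mathbb{Z})$. Proposition \ref{kpi1}(1) says $i_*$ is nontrivial; since $i_*$ factors through $f_*$, the map $f_*$ cannot be trivial. Likewise, when $M$ has an aspherical prime summand, Proposition \ref{kpi1}(1) gives that $i_*$ is injective, and a homomorphism whose postcomposition with $g_*$ is injective must itself have trivial kernel; hence $f_*$ is injective in that case. This is the entire argument: once the reduction is in place, both conclusions are immediate transfers of the corresponding statements for the model map $i$.

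The only content beyond Proposition \ref{kpi1} is the existence and up-to-homotopy uniqueness of the classifying map $g\colon X\to K(\pi,1)$, and I expect this to be the one point worth stating carefully. It is a standard consequence of obstruction theory: the higher homotopy groups of $K(\pi,1)$ vanish, so there is no obstruction either to constructing a map out of $X$ inducing a given map on $\pi_1$ or to producing a homotopy between two such maps. No special properties of $X$ are used beyond its being a CW complex, which is why the statement holds for arbitrary cell-complex targets and not merely for manifolds.
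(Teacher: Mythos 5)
Your proof is correct and follows essentially the same route as the paper's: factor $f$ through classifying maps to $K(\pi,1)$ and transfer the conclusions of Proposition \ref{kpi1} along the factorization $i_*=g_*\circ f_*$ on $H_3$. The only (cosmetic) difference is that you normalize the classifying map of $X$ to induce $f_*^{-1}$ so that $g\circ f\simeq i$ directly, whereas the paper takes an arbitrary classifying map $j:X\to K(\pi,1)$ and instead constructs a self-homotopy-equivalence $h$ of $K(\pi,1)$ with $h\circ i\simeq j\circ f$; both devices rest on the same obstruction-theoretic existence and uniqueness of maps into aspherical spaces.
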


\begin{proof}
We use $\pi$ to denote $\pi_1(M)$. There are maps $i:M\to K(\pi,1)$ and $j:X\to K(\pi,1)$ (up to homotopy) that induce isomorphisms on fundamental groups.

Then there exists $h:K(\pi,1)\to K(\pi,1)$ such that the following diagram commutes up to homotopy
\begin{diagram}
K(\pi,1) &\rTo^h & K(\pi,1)\\
\uTo_{i}& &\uTo_{j}\\
M&\rTo^f &X.
\end{diagram}

Since all of $f$, $i$, $j$ induce isomorphisms on $\pi_1$, $h$ also induces an isomorphism on $\pi_1$,
and $h_*:H_3(K(\pi,1);\mathbb{Z})\to H_3(K(\pi,1);\mathbb{Z})$ is an isomorphism.

Now we have an induced commutative diagram on $H_3$:
\begin{diagram}
H_3(K(\pi,1);\mathbb{Z}) &\rTo^{h_*} & H_3(K(\pi,1);\mathbb{Z})\\
\uTo_{i_*}& &\uTo_{j_*}\\
H_3(M;\mathbb{Z}) &\rTo^{f_*} & H_3(X;\mathbb{Z}).
\end{diagram}

By Proposition \ref{kpi1} (1), we know that $i_*$ is nontrivial. Since $h_*$ is an isomorphism, $h_*\circ i_*=j_*\circ f_*:H_3(M;\mathbb{Z})\to H_3(K(\pi,1);\mathbb{Z})$ is nontrivial. So $f_*$ is nontrivial.

If $M$ has an aspherical prime summand, then Proposition \ref{kpi1} (1) implies that $i_*$ is injective. Then the above commutative diagram implies that $f_*$ is injective.
\end{proof}



\begin{proof}[Proof of Corollary \ref{3-3}]
Let $M$ be a closed orientable $3$-manifold whose fundamental group is not free, and let $f:M\to M$ be a map that induces an isomorphism on the fundamental group. According to Proposition 2.3, $f_{*}: H_3(M,  \mathbb{Z})\to H_3(M,  \mathbb{Z})$ is non-trivial. Note that $H_3(M;\mathbb{Z})\cong \mathbb{Z}$, so $f$ is a non-zero degree map.
\end{proof}

Proposition \ref{general} has one more corollary.

\begin{corollary}\label{4mfld}
Let $M$ be a closed orientable $3$-manifold whose fundamental group is not free, and let $N$ be a closed orientable $4$-manifold. If $f:M\to N$ is a map that induces an isomorphism on fundamental groups, then the homomorphism $$f_*:H_3(M;\mathbb{Z})\to H_3(N;\mathbb{Z})$$ is injective.
\end{corollary}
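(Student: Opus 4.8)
The plan is to combine Proposition \ref{general} with the special structure of $H_3$ in these two dimensions. Applying Proposition \ref{general} with $X=N$ (a closed manifold, hence in particular a cell-complex) already yields that $f_*:H_3(M;\mathbb{Z})\to H_3(N;\mathbb{Z})$ is nontrivial, so the only remaining task is to upgrade nontriviality to injectivity. The whole point is that, in this dimension range, these two conditions coincide, and I would establish this by pinning down the two groups involved.

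First I would record that, since $M$ is a closed connected orientable $3$-manifold, $H_3(M;\mathbb{Z})\cong\mathbb{Z}$, generated by the fundamental class $[M]$. Thus the domain of $f_*$ is infinite cyclic, and a homomorphism out of $\mathbb{Z}$ fails to be injective precisely when it sends the generator to a nonzero torsion element; in particular $f_*$ is injective as soon as $f_*([M])$ has infinite order.

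The key step is then to show that $H_3(N;\mathbb{Z})$ is torsion-free, so that the nonzero element $f_*([M])$ cannot be torsion. Since $N$ is a closed orientable $4$-manifold, Poincar\'e duality gives $H_3(N;\mathbb{Z})\cong H^1(N;\mathbb{Z})$, and the universal coefficient theorem identifies $H^1(N;\mathbb{Z})\cong\operatorname{Hom}(H_1(N;\mathbb{Z}),\mathbb{Z})$, since the relevant $\operatorname{Ext}$ term $\operatorname{Ext}(H_0(N;\mathbb{Z}),\mathbb{Z})$ vanishes ($H_0$ being free). A group of the form $\operatorname{Hom}(-,\mathbb{Z})$ is torsion-free, so $H_3(N;\mathbb{Z})$ is torsion-free.

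Putting these together: $f_*$ is nontrivial by Proposition \ref{general}, so $f_*([M])\neq 0$; this element lives in the torsion-free group $H_3(N;\mathbb{Z})$, hence has infinite order; and a homomorphism from $\mathbb{Z}$ sending the generator to an infinite-order element is injective. I do not expect a genuine obstacle here---the only substantive input beyond Proposition \ref{general} is the torsion-freeness of $H_3$ of a closed orientable $4$-manifold, which is exactly the feature that makes ``nontrivial'' automatically promote to ``injective''.
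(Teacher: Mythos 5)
Your proposal is correct and follows essentially the same route as the paper's proof: apply Proposition \ref{general} to get nontriviality, then use Poincar\'e duality $H_3(N;\mathbb{Z})\cong H^1(N;\mathbb{Z})$ together with torsion-freeness of $H^1$ (which the paper asserts and you justify via the universal coefficient theorem) and $H_3(M;\mathbb{Z})\cong\mathbb{Z}$ to upgrade nontrivial to injective.
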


\begin{proof}
Since $N$ is a closed orientable $4$-manifold, the Poincare duality implies $H_3(N;\mathbb{Z})\cong H^1(N;\mathbb{Z})$ and it is torsion free. Since Proposition \ref{general} implies that $f_*:H_3(M;\mathbb{Z})\to H_3(N;\mathbb{Z})$ is nontrivial and $H_3(M;\mathbb{Z})\cong \mathbb{Z}$, $f_*$ must be injective.
\end{proof}





Now we prove the first case of Theorem \ref{3-4map}.

\begin{proposition}\label{aspherical}
Let $M$ be a closed orientable $3$-manifold with an aspherical $3$-manifold in its prime decomposition, and let $N$ be a closed orientable $4$-manifold. Then there is no map $f:M\to N$ such that $f_*:\pi_1(M)\to \pi_1(N)$ is an isomorphism.
\end{proposition}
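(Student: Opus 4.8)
The plan is to assume such a map $f$ exists and derive a contradiction by comparing $N$ with the Eilenberg--MacLane space $K(\pi,1)$, where $\pi=\pi_1(M)$, playing the vanishing $H^4(K(\pi,1);\mathbb{Q})=0$ from Proposition \ref{kpi1}(2) against Poincar\'e duality on the closed $4$-manifold $N$. First I would record the consequences of the hypothesis. Since $M$ has an aspherical prime summand $M_1$, which is itself a $K(\pi_1(M_1),1)$ with $H_3(M_1;\mathbb{Z})\cong\mathbb{Z}\neq 0$, the group $\pi_1(M_1)$ is not free; as it is a free factor of $\pi_1(M)$, the group $\pi=\pi_1(M)$ is not free either, so Proposition \ref{kpi1} applies. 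Fix maps $i:M\to K(\pi,1)$ and $j:N\to K(\pi,1)$ inducing the identity, respectively $f_*^{-1}$, on $\pi_1$, so that $j\circ f\simeq i$ by asphericity of $K(\pi,1)$.

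By Proposition \ref{kpi1}(1), the class $i_*[M]\in H_3(K(\pi,1);\mathbb{Z})$ has infinite order, since the aspherical summand contributes a free $\mathbb{Z}$-summand to $H_3(K(\pi,1);\mathbb{Z})$ on which $[M]$ projects nontrivially. Hence there is a homomorphism $H_3(K(\pi,1);\mathbb{Z})\to\mathbb{Z}$ not vanishing on $i_*[M]$, and by the universal coefficient theorem it is represented by a class $\beta\in H^3(K(\pi,1);\mathbb{Z})$ with $\langle\beta,i_*[M]\rangle\neq 0$. Transporting $\beta$ to $N$, naturality of the Kronecker pairing together with $j\circ f\simeq i$ gives
$$\langle j^*\beta,\,f_*[M]\rangle=\langle\beta,\,(j\circ f)_*[M]\rangle=\langle\beta,\,i_*[M]\rangle\neq 0,$$
so in particular $j^*\beta\neq 0$ in $H^3(N;\mathbb{Q})$. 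Since $N$ is a closed orientable $4$-manifold, Poincar\'e duality makes the cup product pairing $H^1(N;\mathbb{Q})\times H^3(N;\mathbb{Q})\to H^4(N;\mathbb{Q})\cong\mathbb{Q}$ nondegenerate, so there exists $\alpha\in H^1(N;\mathbb{Q})$ with $\alpha\cup j^*\beta\neq 0$.

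The contradiction then comes from $H^4(K(\pi,1);\mathbb{Q})=0$. Because $f_*$ is an isomorphism on $\pi_1$, the map $j^*:H^1(K(\pi,1);\mathbb{Q})\to H^1(N;\mathbb{Q})$ is an isomorphism, so $\alpha=j^*\alpha_0$ for some $\alpha_0\in H^1(K(\pi,1);\mathbb{Q})$. Then
$$\alpha\cup j^*\beta=j^*\alpha_0\cup j^*\beta=j^*(\alpha_0\cup\beta),$$
with $\alpha_0\cup\beta\in H^4(K(\pi,1);\mathbb{Q})$. Proposition \ref{kpi1}(2) gives $H_4(K(\pi,1);\mathbb{Z})=0$, whence $H^4(K(\pi,1);\mathbb{Q})=0$; therefore $\alpha_0\cup\beta=0$ and $\alpha\cup j^*\beta=0$, contradicting the previous paragraph.

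I expect the main subtlety to be organizing the argument so that \emph{both} cohomology classes entering the cup product on $N$ are pulled back from $K(\pi,1)$. The $H^1$ factor comes for free from the $\pi_1$-isomorphism, but the $H^3$ factor must be produced on $K(\pi,1)$ as a class detecting the infinite-order element $i_*[M]$; this is exactly the point where the aspherical-summand hypothesis, through the injectivity/infinite-order statement of Proposition \ref{kpi1}(1), is indispensable (mere nontriviality of $i_*[M]$, as for torsion summands, would be killed by any homomorphism to $\mathbb{Z}$ and the argument would fail).
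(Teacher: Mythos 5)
Your proof is correct and follows essentially the same route as the paper's: use Proposition \ref{kpi1}(1) to produce a degree-$3$ class on $K(\pi,1)$ that pulls back to a non-torsion class on $N$, use Poincar\'e duality on $N$ to find an $H^1$ partner with nonzero cup product, pull that partner back through the $H^1$-isomorphism induced by the $\pi_1$-isomorphism, and contradict the vanishing of $H_4(K(\pi,1);\mathbb{Z})$ from Proposition \ref{kpi1}(2). The only cosmetic difference is that you pass to $\mathbb{Q}$-coefficients, so the paper's ``torsion versus infinite order'' contradiction in $H^4(\cdot;\mathbb{Z})$ becomes a vanishing-versus-nonvanishing contradiction over $\mathbb{Q}$.
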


\begin{proof}

We suppose that such a map $f:M\to N$ exists. Let $\pi=\pi_1(M)$, then we take a map $k:N\to K(\pi,1)$ such that $$k\circ f:M\xrightarrow{f}N\xrightarrow{k} K(\pi,1)$$ induces the identity homomorphism on $\pi_1$. 

Then we have an induced homomorphism on cohomology rings $$(k\circ f)^*:H^*(K(\pi,1);\mathbb{Z})\xrightarrow{k^*} H^*(N;\mathbb{Z})\xrightarrow{f^*} H^*(M;\mathbb{Z}).$$

By the moreover part of Proposition \ref{kpi1} (1), $(k\circ f)_*:H_3(M;\mathbb{Z})\to H_3(K(\pi,1);\mathbb{Z})$ is injective. So $(k\circ f)^*:H^3(K(\pi,1);\mathbb{Z})\to H^3(M;\mathbb{Z})\cong \mathbb{Z}$ is nontrivial, and the image of $k^*:H^3(K(\pi,1);\mathbb{Z})\to H^3(N;\mathbb{Z})$ contains an infinite order element. Take $\alpha \in H^3(K(\pi,1);\mathbb{Z})$ such that $\alpha'=k^*(\alpha)\in H^3(N;\mathbb{Z})$ has infinite order. 

By Poincare duality, there exists $\beta'\in H^1(N;\mathbb{Z})$, such that $\alpha'\cup \beta'\in H^4(N;\mathbb{Z})$ is nontrivial. Since $k:N\to K(\pi,1)$ induces an isomorphism on $\pi_1$, $k^*:H^1(K(\pi,1);\mathbb{Z})\to H^1(N;\mathbb{Z})$ is an isomorphism. So there exists $\beta\in H^1(K(\pi,1);\mathbb{Z})$ such that $k^*(\beta)=\beta'$.

By Proposition \ref{kpi1} (2) and the universal coefficient theorem, $H^4(K(\pi,1);\mathbb{Z})$ consists of torsion elements, thus $\alpha\cup \beta \in H^4(K(\pi,1);\mathbb{Z})$ is a torsion element. However, we have 
$$k^*(\alpha\cup \beta)=k^*(\alpha)\cup k^*(\beta)=\alpha'\cup \beta'\ne 0\in H^4(N;\mathbb{Z})\cong \mathbb{Z}$$ 
is not a torsion element. So we get a contradiction, thus such a map $f:M\to N$ does not exist.
\end{proof}




\section{Maps from 3-manifolds to 4-manifolds and $\mathbb{Z}_p$-coefficient (co)homology of $3$-manifold groups}

In this section, we will prove the second case of Theorem \ref{3-4map}, which concerns connected sums of spherical $3$-manifolds and $S^1\times S^2$. So we assume that $M=(\#_{j=1}^n N_j)\#(\#^k S^1\times S^2)$, such that $n\geq 1$ and $1<|\pi_1(N_j)|<\infty$ for all $j$.

We will first consider the special case that there exists a prime number $p$, such that $\pi_1(N_j)\cong \mathbb{Z}_p$ for all $j$. The proof divides to two cases depending on the parity of $p$.

The following lemma is parallel to Proposition \ref{kpi1} (1), with $\mathbb{Z}$-coefficient replaced by $\mathbb{Z}_p$-coefficient, and its proof is also similar.

\begin{lemma}\label{cohomologyKpi1}
Let $M$ be a closed orientable $3$-manifold with prime decomposition 
$$M=(\#_{j=1}^n N_j)\#(\#^k S^1\times S^2)$$
with $n\geq 1$, and there exists a prime number $p$ such that $\pi_1(N_j)\cong \mathbb{Z}_p$ for all $j$. Let $i:M\to K(\pi,1)$ be the map that induces the identity homomorphism on $\pi_1$, then the homomorphism $$i_*:H_3(M;\mathbb{Z}_p)\to H_3(K(\pi,1);\mathbb{Z}_p)$$ is injective.
\end{lemma}

We first consider the case that $p$ is odd.

\begin{proposition}\label{podd}
Let $M$ be a closed orientable $3$-manifold with prime decomposition 
$$M=(\#_{j=1}^n N_j)\#(\#^kS^1\times S^2)$$ 
and $n\geq 1$,  and there is an odd prime $p$ such that $\pi_1(N_j)\cong \mathbb{Z}_p$ for all $j$. Then there is no map $f:M\to N$ to a closed orientable $4$-manifold $N$ that induces an isomorphism on $\pi_1$.
\end{proposition}

\begin{proof}
Suppose there is a map $f:M\to N$ to a closed orientable $4$-manifold that induces an isomorphism on $\pi_1$. We take a map $k:N\to K(\pi,1)$ such that $$k\circ f:M\to N\to K(\pi,1)$$ induces the identity homomorphism on $\pi_1$.

Similar to the second and third paragraphs of the proof of Proposition \ref{aspherical}, with $\mathbb{Z}$-coefficient replaced by $\mathbb{Z}_p$-coefficient (invoking Lemma \ref{cohomologyKpi1}), there are $\alpha\in H^3(K(\pi,1);\mathbb{Z}_p), \beta \in H^1(K(\pi,1);\mathbb{Z}_p)$ such that $k^*(\alpha\cup \beta)\ne 0\in H^4(N;\mathbb{Z}_p)\cong \mathbb{Z}_p$.



By \cite[Chapter XII Section 7]{CE}, $H^*(\mathbb{Z}_p;\mathbb{Z}_p)\cong \mathbb{Z}_p[x,y]/(x^2)$, with $|x|=1,|y|=2$, so the cup product $H^3(\mathbb{Z}_p;\mathbb{Z}_p)\times H^1(\mathbb{Z}_p;\mathbb{Z}_p)\to H^4(\mathbb{Z}_p;\mathbb{Z}_p)$ is trivial. Since $H^*(\mathbb{Z};\mathbb{Z}_p)=\mathbb{Z}_p[t]/(t^2)$ with $|t|=1$, the cup product $H^3(\mathbb{Z};\mathbb{Z}_p)\times H^1(\mathbb{Z};\mathbb{Z}_p)\to H^4(\mathbb{Z};\mathbb{Z}_p)$ is trivial. Since $H^*(K(\pi,1);\mathbb{Z}_p)\cong (\oplus_{j=1}^n H^*(\mathbb{Z}_p;\mathbb{Z}_p))\oplus(\oplus^k{H^*(\mathbb{Z};\mathbb{Z}}_p))$, the restriction of the cup product of $H^*(K(\pi,1);\mathbb{Z}_p)$ on $H^3\times H^1\to H^4$ is trivial, so $\alpha\cup \beta=0$ holds.
Now we get a contradiction, thus there is no map $f:M\to N$ that induces an isomorphism on $\pi_1$.
\end{proof}

The second case is $p=2$. Note that any $3$-manifold with $\pi_1\cong \mathbb{Z}_2$ is homeomorphic to $RP^3$.

\begin{proposition}\label{RP3andZ}
Let $M$ be a closed orientable $3$-manifold with prime decomposition $$M=(\#_{j=1}^n RP_j^3)\#(\#^k S^1\times S^2)$$ and $n\geq 1$. Then there does not exist a map $f:M\to N$ to a closed orientable $4$-manifold $N$ that induces an isomorphism on their fundamental groups.
\end{proposition}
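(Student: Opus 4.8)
The plan is to run the same cohomological strategy as in Proposition \ref{finiteandZ}, now with $\mathbb{Z}_2$-coefficients, but to replace the odd-prime input of Proposition \ref{cohomologyKpi1}(2) (which is false at $p=2$) by an argument using the Wu formula. First I would suppose for contradiction that a map $f:M\to N$ inducing an isomorphism on $\pi_1$ exists, set $\pi=\pi_1(M)$, and choose $k:N\to K(\pi,1)$ with $k_*$ inverse to $f_*$, so that $k\circ f:M\to K(\pi,1)$ realizes the identity on $\pi_1$. By the computation in the proof of Proposition \ref{cohomologyKpi1}, $K(\pi,1)\simeq (\vee_{i=1}^n RP^{\infty}_i)\vee(\vee^k S^1)$, whose $\mathbb{Z}_2$-cohomology ring is generated by degree-one classes $x_i$ (from the $i$-th $RP^{\infty}$) and $y_l$ (from the $l$-th circle), with $H^3(K(\pi,1);\mathbb{Z}_2)=\bigoplus_i \mathbb{Z}_2\,x_i^3$, $H^4(K(\pi,1);\mathbb{Z}_2)=\bigoplus_i \mathbb{Z}_2\,x_i^4$, and all cup products across distinct wedge summands vanishing. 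This is exactly where the proof of Proposition \ref{finiteandZ} breaks down: at $p=2$ one has $x_1^3\cup x_1=x_1^4\neq 0$, so the pairing $H^3\times H^1\to H^4$ is \emph{not} trivial, and a substitute for Proposition \ref{cohomologyKpi1}(2) is required.

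Next I would locate the relevant nonzero class. Put $\beta_i'=k^*(x_i)$ and $\gamma_l'=k^*(y_l)$ in $H^1(N;\mathbb{Z}_2)$; since $k$ induces an isomorphism on $\pi_1$, it induces one on $H^1(\,\cdot\,;\mathbb{Z}_2)$, so these classes form a basis. Set $\alpha'=k^*(x_1^3)=(\beta_1')^3\in H^3(N;\mathbb{Z}_2)$. By Proposition \ref{cohomologyKpi1}(1) with $p=2$, the map $(k\circ f)_*:H_3(M;\mathbb{Z}_2)\to H_3(K(\pi,1);\mathbb{Z}_2)$ is injective, and the image of $[M]$ equals $\sum_{i=1}^n[N_i]$, which pairs nontrivially with $x_1^3$. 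Dualizing over the field $\mathbb{Z}_2$ then gives $f^*(\alpha')=(k\circ f)^*(x_1^3)\neq 0$ in $H^3(M;\mathbb{Z}_2)\cong\mathbb{Z}_2$, so in particular $\alpha'\neq 0$.

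The crux, which I expect to be the main obstacle, is to show $(\beta_1')^4=k^*(x_1^4)=0$ in $H^4(N;\mathbb{Z}_2)$; here I would invoke the Wu formula. Since $N$ is closed and orientable, $w_1(N)=0$, so the first Wu class $v_1=w_1$ vanishes, whence $Sq^1:H^3(N;\mathbb{Z}_2)\to H^4(N;\mathbb{Z}_2)$ is the zero map. On the other hand, for the degree-one class $\beta_1'$ one has $Sq^1\beta_1'=(\beta_1')^2$, and the total square relation $Sq(x)=x+x^2$ on $H^1$ together with the Cartan formula gives $Sq^1\big((\beta_1')^3\big)=(\beta_1')^4$. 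Combining these yields $(\beta_1')^4=Sq^1(\alpha')=0$. This is precisely the orientability input that compensates for the failure of $\beta\cup\beta=0$ at the prime $2$.

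Finally I would derive the contradiction from Poincaré duality. Over the field $\mathbb{Z}_2$ the cup-product pairing $H^3(N;\mathbb{Z}_2)\times H^1(N;\mathbb{Z}_2)\to H^4(N;\mathbb{Z}_2)\cong\mathbb{Z}_2$ is nondegenerate, so the nonzero class $\alpha'$ must pair nontrivially with some element of $H^1(N;\mathbb{Z}_2)$. But using the basis $\{\beta_i',\gamma_l'\}$ and the vanishing of cross-summand cup products in $K(\pi,1)$, I compute $\alpha'\cup\beta_i'=k^*(x_1^3\cup x_i)=\delta_{1i}\,(\beta_1')^4=0$ and $\alpha'\cup\gamma_l'=k^*(x_1^3\cup y_l)=0$; hence $\alpha'\cup\beta''=0$ for every $\beta''\in H^1(N;\mathbb{Z}_2)$. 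This contradicts nondegeneracy, so no such $f$ can exist.
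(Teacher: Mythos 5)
Your proof is correct, and while it follows the paper's overall skeleton, the mechanism producing the final contradiction is genuinely different. Both arguments begin identically: Proposition \ref{cohomologyKpi1}(1) at $p=2$ yields a nonzero class $\alpha'=k^*(x_1^3)\in H^3(N;\mathbb{Z}_2)$, the wedge decomposition $K(\pi,1)\simeq(\vee_{i=1}^n RP_i^{\infty})\vee(\vee^k S^1)$ controls all cup products upstairs, and the nondegeneracy of the mod-2 Poincar\'e duality pairing on $N$ is the fact to be contradicted. At that point the paper avoids Steenrod operations entirely: it composes $k$ with the projection $p:K(\pi,1)\to RP_1^{\infty}$, observes that $(p\circ k)^*$ is then an isomorphism on $H^4(\,\cdot\,;\mathbb{Z}_2)$, and gets the contradiction from a commutative square comparing integral and mod-2 homology in degree $4$ --- the integral fundamental class of the closed orientable $N$ surjects onto $H_4(N;\mathbb{Z}_2)$, yet its image in $H_4(RP_1^{\infty};\mathbb{Z}_2)$ must factor through $H_4(RP_1^{\infty};\mathbb{Z})=0$. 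You instead stay inside $H^*(N;\mathbb{Z}_2)$ and kill the pairing directly: Wu's formula gives $v_1=w_1=0$, so $Sq^1$ vanishes on $H^3(N;\mathbb{Z}_2)$, and since $Sq^1\bigl((\beta_1')^3\bigr)=(\beta_1')^4$ by the Cartan formula, every product of $\alpha'$ with a degree-one class vanishes, contradicting nondegeneracy. Both proofs use orientability of $N$ in an essential and correct place --- the paper through the existence of the integral fundamental class, you through $w_1(N)=0$ --- and some such input is unavoidable, since $RP^3\subset RP^4$ shows the statement fails for non-orientable $N$. Your route is shorter and makes the role of orientability more transparent (it isolates the clean fact that $x^4=Sq^1(x^3)=0$ for every $x\in H^1$ of a closed orientable $4$-manifold); the paper's route is more elementary in the sense that it needs no characteristic classes or Steenrod squares, only naturality of coefficient reduction and the vanishing of $H_4(RP^{\infty};\mathbb{Z})$.
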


\begin{proof}
Let $\pi=\pi_1(M)$. Suppose there is a map $f:M\to N$ that induces an isomorphism on $\pi_1$, then there is a map $k:N\to K(\pi,1)$ such that $k\circ f:M\to K(\pi,1)$ induces the identity homomorphism on $\pi_1$, and a model of $K(\pi,1)$ is $(\vee_{j=1}^nRP_j^{\infty})\vee (\vee^k S^1)$.

Again, similar to the second and third paragraphs of the proof of Proposition \ref{aspherical}, there are $\alpha\in H^3(K(\pi,1);\mathbb{Z}_2), \beta \in H^1(K(\pi,1);\mathbb{Z}_2)$ such that $k^*(\alpha\cup \beta)\ne 0\in H^4(N;\mathbb{Z}_2)\cong \mathbb{Z}_2$. 
Since $H^3(K(\pi,1);\mathbb{Z}_2)\cong \oplus_{j=1}^nH^3(RP_j^{\infty};\mathbb{Z}_2)$,  $H^1(K(\pi,1);\mathbb{Z}_2)\cong (\oplus_{j=1}^nH^1(RP_j^{\infty};\mathbb{Z}_2))\oplus(\oplus^kH^1(S^1;\mathbb{Z}_2))$, and the cup product preserves these components, we can assume that $\alpha\in H^3(RP_1^{\infty};\mathbb{Z}_2)$ and $\beta\in H^1(RP_1^{\infty};\mathbb{Z}_2)$, up to permuting indices.

Let $p:K(\pi,1)=(\vee_{i=1}^nRP_i^{\infty})\vee (\vee^k S^1)\to RP_1^{\infty}$ be identity on $RP_1^{\infty}$ and be constant on other components. Then  the map $p\circ k:N\to RP_1^{\infty}$ satisfies $(p\circ k)^*(\alpha\cup \beta)\ne 0 \in H^4(N;\mathbb{Z}_2)$. So $(p\circ k)^*:H^4(RP_1^{\infty};\mathbb{Z}_2)\cong \mathbb{Z}_2\to H^4(N;\mathbb{Z}_2)\cong \mathbb{Z}_2$ is an isomorphism. 

The homomorphism $\mathbb{Z}\to \mathbb{Z}_2$ on coefficients induces the following commutative diagram
\begin{diagram}
\mathbb{Z}_2 \cong & H^4(RP_1^{\infty};\mathbb{Z}) & \rTo{(p\circ k)^*} & H^4(N;\mathbb{Z}) & \cong  \mathbb{Z}\\
&\dTo                    &          &  \dTo &   \\
\mathbb{Z}_2 \cong & H^4(RP_1^{\infty};\mathbb{Z}_2) & \rTo{(p\circ k)^*} & H^4(N;\mathbb{Z}_2) & \ \cong  \mathbb{Z}_2.\\
\end{diagram}
We know that the first vertical homomorphism and the second horizontal homomorphism are isomorphisms, but a self-isomorphism of $\mathbb{Z}_2$ can not factor through $\mathbb{Z}$ in the upper right corner. So we get a contradiction.

\end{proof}

Now we are ready to prove the second case of Theorem \ref{3-4map}.

\begin{proposition}\label{2powerandZ}
Let $M$ be a closed orientable $3$-manifold with prime decomposition 
$$M=(\#_{j=1}^n N_j)\#(\#^k S^1\times S^2)$$ 
and $n\geq 1$, such that each $\pi_1(N_j)$ is a nontrivial finite group. Then there does not exist a map $f:M\to N$ to a closed orientable $4$-manifold $N$ that induces an isomorphism on their fundamental groups.
\end{proposition}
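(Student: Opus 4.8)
The plan is to follow the strategy of Proposition~\ref{RP3andZ} almost verbatim, replacing the infinite projective space $RP^{\infty}=K(\mathbb{Z}/2,1)$ by the classifying space $K(G_1,1)$ of a general finite $2$-group $G_1=\pi_1(N_1)$, and to localize the whole argument on a single prime summand.

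Suppose for contradiction that $f:M\to N$ induces an isomorphism on $\pi_1=\pi$, and let $k:N\to K(\pi,1)$ be a map with $k\circ f$ inducing the identity on $\pi_1$. Since $2$ divides $|\pi_1(N_1)|$, Proposition~\ref{cohomologyKpi1}(1) applies with $p=2$: the map $(k\circ f)_*:H_3(M;\mathbb{Z}_2)\to H_3(K(\pi,1);\mathbb{Z}_2)$ is injective, so $k^*:H^3(K(\pi,1);\mathbb{Z}_2)\to H^3(N;\mathbb{Z}_2)$ is nonzero. Using the wedge decomposition $K(\pi,1)\simeq(\vee_{i=1}^n K(G_i,1))\vee(\vee^k S^1)$ established in Proposition~\ref{kpi1}, we have $H^3(K(\pi,1);\mathbb{Z}_2)=\oplus_i H^3(K(G_i,1);\mathbb{Z}_2)$, so after reindexing I may choose $\alpha\in H^3(K(G_1,1);\mathbb{Z}_2)$, supported on a single factor, with $\alpha'=k^*\alpha\ne 0$. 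By $\mathbb{Z}_2$-Poincar\'e duality on $N$ there is $\beta'\in H^1(N;\mathbb{Z}_2)$ with $\alpha'\cup\beta'\ne 0\in H^4(N;\mathbb{Z}_2)$; since $k^*$ is an isomorphism on $H^1(-;\mathbb{Z}_2)$, I write $\beta'=k^*\beta$, so that $k^*(\alpha\cup\beta)=\alpha'\cup\beta'\ne0$.

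Because $\alpha$ is supported on the $K(G_1,1)$-summand and cup products of classes from distinct wedge factors vanish, $\alpha\cup\beta=\alpha\cup\beta_1$, where $\beta_1\in H^1(K(G_1,1);\mathbb{Z}_2)$ is the $G_1$-component of $\beta$. Thus both classes live on $K(G_1,1)$, and composing $k$ with the retraction $q:K(\pi,1)\to K(G_1,1)$ gives $g=q\circ k:N\to K(G_1,1)$ with $g^*(\alpha\cup\beta_1)=k^*(\alpha\cup\beta)\ne0$. Hence $g^*:H^4(K(G_1,1);\mathbb{Z}_2)\to H^4(N;\mathbb{Z}_2)\cong\mathbb{Z}_2$ is surjective, and dually $g_*:H_4(N;\mathbb{Z}_2)\to H_4(K(G_1,1);\mathbb{Z}_2)$ is nonzero. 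Now I invoke the key vanishing: applying Proposition~\ref{kpi1}(2) to the closed $3$-manifold $N_1$, whose finite fundamental group $G_1$ is not free, gives $H_4(K(G_1,1);\mathbb{Z})=0$. Running the integral-to-$\mathbb{Z}_2$ coefficient square exactly as in Proposition~\ref{RP3andZ} — the reduction $H_4(N;\mathbb{Z})\to H_4(N;\mathbb{Z}_2)$ is onto, the bottom $g_*$ is nonzero on $\mathbb{Z}_2$-coefficients, yet the top $g_*$ factors through $H_4(K(G_1,1);\mathbb{Z})=0$ on integral coefficients — forces the composite $H_4(N;\mathbb{Z})\to H_4(K(G_1,1);\mathbb{Z}_2)$ to be simultaneously zero and nonzero, the desired contradiction.

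The step I expect to be the main obstacle is the passage from $RP^{\infty}$ to a general $K(G_1,1)$. For $RP^{\infty}$ one has the luxury that $H^3$ is generated by the single cup-power $w^3$ and that the pairing $H^3\times H^1\to H^4$ is visibly nontrivial; for an arbitrary finite $2$-group $G_1$ the mod-$2$ cohomology ring is complicated, and this particular cup product can even vanish (for instance when $G_1=\mathbb{Z}/4$). The point of the argument above is that one never needs nondegeneracy of the cup product on $K(G_1,1)$ a priori: Poincar\'e duality on $N$ manufactures the nonzero class $\alpha'\cup\beta'$ downstairs, and the only structural input required of $G_1$ is the vanishing $H_4(K(G_1,1);\mathbb{Z})=0$, valid for every spherical space-form group via Proposition~\ref{kpi1}(2). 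I would also take care over two routine but essential points: that $\alpha$ can be chosen supported on one factor (immediate from the direct-sum decomposition of $H^3$ together with $k^*\ne 0$), and that replacing $\beta$ by its $G_1$-component leaves $\alpha\cup\beta$ unchanged (cross-terms between distinct wedge summands, and terms from the $S^1$ factors, all die). Finally, I note that this same coefficient-square mechanism — in fact the observation that $k_*[N]$ already vanishes in $H_4(K(\pi,1);\mathbb{Z})=0$, whence $k^*$ kills every pulled-back top-degree product — yields a uniform contradiction that bypasses the cup product entirely and would streamline Propositions~\ref{aspherical} and \ref{finiteandZ} as well.
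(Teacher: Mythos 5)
Your proof is correct, and it takes a genuinely different route from the paper's. The paper does not generalize Proposition~\ref{RP3andZ} cohomologically; it \emph{reduces} to it: since each $\pi_1(N_i)$ is a finite $2$-group, it has an order-$2$ normal (indeed central) subgroup, giving a surjection $\rho:\pi_1(M)\to\oplus_i G_i$ with $|\ker\rho_i|=2$; the Kurosh subgroup theorem identifies $\ker(\rho)$ as a free product of $\mathbb{Z}_2$'s and $\mathbb{Z}$'s, the map $f$ lifts to the corresponding finite covers $\tilde{f}:\tilde{M}\to\tilde{N}$, and geometrization gives $\tilde{M}=(\#^a RP^3)\#(\#^b S^2\times S^1)$ with $a\geq 1$, contradicting Proposition~\ref{RP3andZ}. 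Your argument instead reruns the cohomological proof of Proposition~\ref{RP3andZ} with $K(G_1,1)$ in place of $RP^{\infty}$, and it does go through: the choice of $\alpha$ supported on a single wedge summand is justified by linearity of $k^*$ on $\oplus_i H^3(K(G_i,1);\mathbb{Z}_2)$; the cross-terms in $\alpha\cup\beta$ vanish as you say; surjectivity of $g^*$ onto $H^4(N;\mathbb{Z}_2)\cong\mathbb{Z}_2$ dualizes over the field $\mathbb{Z}_2$ to $g_*\neq 0$ on $H_4(\cdot\,;\mathbb{Z}_2)$; and the single structural input, $H_4(K(G_1,1);\mathbb{Z})=0$, is exactly Proposition~\ref{kpi1}(2) applied to the spherical prime summand $N_1$, whose finite nontrivial group is not free. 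Your diagnosis of why the generalization is legitimate is also the right one: nondegeneracy of the pairing $H^3\times H^1\to H^4$ on $K(G_1,1)$ is never used (it genuinely fails for $\mathbb{Z}/4$, where $x^2=0$ in mod-$2$ cohomology), because the nonzero product is manufactured downstairs by Poincar\'e duality on $N$ and only \emph{pulled back} from upstairs. As for the trade-off: the paper's route confines all cohomological computation to the completely explicit $RP^{\infty}$, at the price of group theory (Kurosh), covering-space lifting, and a geometrization-based identification of the cover; your route is purely (co)homological, turns Proposition~\ref{RP3andZ} into a special case rather than a needed lemma, and, as your closing observation indicates, the same mechanism ($k_*[N]=0$ because $H_4(K(\pi,1);\mathbb{Z})=0$, combined with naturality of coefficient reduction and the injectivity of evaluation against the fundamental class) yields a uniform proof of all three cases of Theorem~\ref{3-4map} --- with the small caveat that this bypasses cup products \emph{in $K(\pi,1)$} (Proposition~\ref{cohomologyKpi1}(2) and the torsion argument in Proposition~\ref{aspherical}), not the cup product on $N$, which remains essential for producing the class $\alpha'\cup\beta'$.
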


\begin{proof}
Suppose that there exists a map $f:M\to N$ that induces an isomorphism on $\pi_1$. 

We have a surjective homomorphism 
$$\rho:\pi_1(M)\cong (*_{j=1}^n\pi_1(N_j))*(*^k \mathbb{Z})\to \oplus_{j=1}^n\pi_1(N_j).$$
Here $\rho$ maps each free factor $\pi_1(N_j)$ to the direct summand $\pi_1(N_j)$ by identity, and maps each free factor $\mathbb{Z}$ to the unit element. 
We take a cyclic subgroup $C<\pi_1(N_1)$ with prime order $p$ and denote $\bar{C}=C\times \{1\}\times \cdots \times \{1\}<\oplus_{j=1}^n\pi_1(N_j)$.

Then $K=\rho^{-1}(\bar{C})<\pi_1(M)$ is a finite index subgroup. By the Kurosh Theorem in group theory (\cite{ScW}), we have
$$K\cong (*_{l=1}^a C_l)*(*^b \mathbb{Z}),$$
where each  $C_l$ is a conjugation of $C<\pi_1(N_1)<\pi_1(M)$ in $\pi_1(M)$, and
$a\ge 1$ holds.

Let $\tilde{M}$ and $\tilde{N}$  be the finite covering spaces of $M$ and $N$ corresponding to $K<\pi_1(M)\cong \pi_1(N)$ respectively.
Then the map $f:M\to N$ lifts to a map $\tilde{f}:\tilde{M}\to \tilde{N}$, and it induces an isomorphism on $\pi_1$. 
Here we have  $$\tilde{M}= (\#_{l=1}^a \tilde{N}_l)\#(\#^b S^1\times S^2).$$ 
with $a\geq 1$ and $\pi_1(\tilde{N}_l)\cong C_l\cong \mathbb{Z}_p$ for all $l$.  
The existence of $\tilde{f}$ contradicts Propositions \ref{podd} and \ref{RP3andZ}, thus such a map $f:M\to N$ does not exist.
\end{proof}

\begin{remark} In the proof of Proposition \ref{2powerandZ},  if we denote $n_j=|\pi_1(N_j)|$, then we have 
$$a=\prod_{j=2}^nn_j,\ b=(k+n-1-\frac{p}{n_1}-\sum_{j=2}^n\frac{1}{n_j})\cdot \frac{\prod_{j=1}^nn_j}{p}+1.$$
\end{remark}

\section{Embedding $n$-manifolds into $(n+1)$-manifolds}

\begin{proof}[Proof of Theorem \ref{n}]
Suppose there is an embedding $i: M\to N$ such that $i_* : \pi_1(M) \to \pi_1(N)$ is an isomorphism. We can assume that $i(M)$ is contained in the interior of $N$ (by pushing $M$ into the interior of $N$).
We also suppose that $M=U\#V$, where $U$ is aspherical.

Note $\pi_1(U\#V)=\pi_1(U)*\pi_1(V)$.
We construct a homomorphism
$$\phi=p\circ (i_*)^{-1} : \pi_1(N) \xrightarrow{(i_*)^{-1}} \pi_1(M)=\pi_1(U)*\pi_1(V)\xrightarrow{p} \pi_1(U),$$
where $p: \pi_1(U)*\pi_1(V)\to \pi_1(U)$ maps $\pi_1(V)$ to the unit, and is the identity on
$\pi_1(U)$.

Since $U$ is aspherical, there is a map $f: N\to U$ such that 
$$f_*=\phi : \pi_1(N) \to  \pi_1(U).$$
Let $$g=f\circ i : M= U\# V\to U.$$
Let  $q:  U\# V\to U$ be the map that pinches $V$ to a point.
Clearly $q$ is a map of degree one. Since $g$ and $q$ induce the same homomorphism on $\pi_1$ and $U$ is aspherical, 
$g$ and $q$ are homotopic to each other, thus $g$ is a map of degree one.

Since both $M$ and $N$ are orientable and $i:M\to N$ is a codimension-$1$ embedding, $M$ is two-sided in $N$. 
Note that  $M$ must separate $N$. Otherwise there is a loop $\gamma$ in $N$  which meets $M$ transversely at one point,
thus the  algebraic intersection number of $\gamma$ and $M$ is one. Then by a basic fact in algebraic topology,
$\gamma$ can not be homotoped into $M$, which contradicts the assumption that $i_* : \pi_1(M) \to \pi_1(N)$ is an isomorphism.

Since $M$ separates $N$ and $\partial N$ has at most one component, the map $i: M\to N$ induces a null homomorphism on
$i_* :  H_n(M, \mathbb{Z})\to H_n(N, \mathbb{Z})$.
It follows that 
$$g_*= f_*\circ i_*: H_n(M,\mathbb{Z})\to H_n(N,\mathbb{Z})\to H_n(U,\mathbb{Z})$$
is a null homomorphism. Hence $g$ cannot have degree 1, and we get a contradiction.
\end{proof}

\section{Maps from 4-manifolds to 3-manifolds}

Although maps from closed orientable 3-manifolds to closed orientable 4-manifolds realizing isomorphisms on $\pi_1$ are rare (Theorem \ref{3-4map}),
there are plenty of maps from closed orientable 4-manifolds to closed orientable 3-manifolds realizing isomorphisms on $\pi_1$, as constructed below.

\begin{example}\label{4-3}
For any closed orientable 3-manifold $M$, we can construct  a closed orientable 4-manifold $M^*$ with the same $\pi_1$, by taking
$$M^*=((M\setminus D^3)\times S^1) \cup (S^2\times D^2).$$
Here $D^3\subset M$ is a 3-ball, then $\partial ((M\setminus D^3)\times S^1)=S^1\times S^2$ and $\partial (S^2\times D^2)=S^1\times S^2$ are identified canonically.
Clearly $$\pi_1(M^*)\cong \pi_1(M\setminus D^3)\cong\pi_1(M).$$

Now we construct  a map $f:M^*\to M$ that induces an isomorphism on $\pi_1$, which is a composition $f=f_3\circ f_2 \circ f_1$ as below.
Fix $y\in S^1$, then 
$$f_1: M^*=((M\setminus D^3)\times S^1) \cup (S^2\times D^2)\to ((M\setminus D^3)\times \{y\}) \cup (S^2\times S^2)$$
 is a quotient  map that projects $((M\setminus D^3) \times S^1, \partial D^3\times S^1)$ to $((M\setminus D^3)\times \{y\}, \partial D^3\times \{y\})$,  
and maps $(S^2\times D^2, S^2\times \partial D^2)$ to $(S^2\times S^2, S^2\times \{y\})$, and  $\partial D^3\times \{y\}$ is identified with
 $S^2\times \{y\}$.
$$f_2:((M\setminus D^3)\times \{y\}) \cup (S^2\times S^2)\to (M\setminus D^3)\times \{y\}$$
is the quotient map that is the identity on $(M\setminus D^3)\times \{y\}$ and projects $S^2\times S^2$ to $S^2\times \{y\}=\partial D^3\times \{y\}$.
$$f_3: (M\setminus D^3)\times \{y\}  \to M$$
is the inclusion.
\end{example}


For each closed orientable 3-manifold $M$,  $\chi_4(\pi_1(M))$, the minimum Euler characteristic among all closed orientable 4-manifolds having $\pi_1$ isomorphic to $\pi_1(M)$, was studied, 
and the $M^*$ constructed in Example \ref{4-3}  is  a closed orientable 4-manifold realizing $\chi_4(\pi_1(M))$ when $M$
contains no $S^1\times S^2$ prime factor, see
\cite{HW},  \cite{Ko}, \cite{Hi}, \cite{KL}, \cite{SW}.

\begin{question}\label{q4-3} 
Suppose a closed orientable 4-manifold $N$ and a closed orientable 3-manifold $M$ have isomorphic $\pi_1$.

(1) Is there a  map $f:N\to M$ inducing an isomorphism
on their $\pi_1$?

(2) Is there a degree-$1$ map $f:N\to X$ to some closed orientable 4-manifold $X$ realizing $\chi_4(\pi_1(M))$? 
In particular, is there a degree-$1$ map $f:N\to M^*$?
\end{question}

\begin{remark}


(i)  Lens spaces $L(5,1)$ and $L(5,2)$ are closed orientable 3-manifolds having isomorphic $\pi_1$ but are not homotopy equivalent.
However, since there is a homotopy equivalence $h:  L(5,1)\setminus D^3 \to L(5,2)\setminus D^3$, 
the  map $g=i\circ h\circ f_2\circ f_1: L(5,1)^*\to L(5,2)$ induces an isomorphism on $\pi_1$. Here $f_2\circ f_1: L(5,1)^*\to L(5,1)\setminus D^3$ is constructed in Example \ref{4-3} and $i: L(5,2)\setminus D^3\to L(5,2)$ is the inclusion.


(ii) If the last sentence of Question \ref{q4-3} (2) has a positive answer for some $M$, then Question \ref{q4-3} (1) has a positive answer for the same $M$. Because each degree-one map  induces a surjection on $\pi_1$ and $\pi_1(M)$ is co-hopfian.


(iii) On the other hand, one may ask Question \ref{q4-3} (2) for any finitely presented group $G$: If $N$  is a closed orientable 4-manifold
with $\pi_1(N)\cong G$,  does $N$ 1-dominate a 4-manifold $X$ realizing $\chi_4(G)$?  
\end{remark}


\begin{thebibliography}{BHJSM}


\bibitem[BCFHKLN]{BCFHKLN} I. Bokor, D. Crowley, S. Friedl, F. Hebestreit, D. Kasprowski, M. Land, J. Nicholson, {\it Connected sum decompositions of high-dimensional manifolds.} 2019–20 MATRIX annals, 5 - 30,
MATRIX Book Ser., 4, Springer, Cham, [2021].

\bibitem[CE]{CE} H. Cartan; S. Eilenberg,
{\it Homological algebra.}
With an appendix by David A. Buchsbaum. Reprint of the 1956 original. Princeton Landmarks in Mathematics. Princeton University Press, Princeton, NJ, 1999. xvi+390 pp.

\bibitem[Da]{Da} R. Daverman, {\it Fundamental group isomorphisms between compact 4-manifolds and their boundaries.} Low-dimensional topology (Knoxville, TN, 1992), 31 - 34,
Conf. Proc. Lecture Notes Geom. Topology, III, Int. Press, Cambridge, MA, 1994.

\bibitem[HW]{HW} J. Hausmann, S. Weinberger, 
{\it Caractéristiques d'Euler et groupes fondamentaux des varietes de dimension 4. (French) [Euler characteristics and fundamental groups of 4-manifolds]} 
Comment. Math. Helv. 60 (1985), no. 1, 139-144.



\bibitem[He]{He}
J. Hempel,  {\it $3$-manifolds,} Princeton University Press And University
of Tokyo Press, 1976.

\bibitem[HKWZ]{HKWZ} C. Hayat-Legrand, E. Kudryavtseva, S.C. Wang,
H. Zieschang: {\it Degrees of self-mappings of Seifert manifolds
with finite fundamental groups}, Dedicated to the memory of Marco
Reni. Rend. Istit. Mat. Univ. Trieste 32, suppl. 1 (2001), 131--147.

\bibitem[Hi]{Hi}  J. A. Hillman, {\it Four-manifolds, geometries and knots.} Geometry $\&$ Topology Monographs, 5. Geometry $\&$ Topology Publications, Coventry, 2002

\bibitem[Ko]{Ko} D. Kotschick, 
{\it Four-manifold invariants of finitely presentable groups.} Topology, geometry and field theory, 89–99, World Sci. Publ., River Edge, NJ, 1994. 

\bibitem[KL]{KL} P. Kirk, C. Livingston,  {\it The geography problem for 4-manifolds with specified fundamental group.} Trans. Amer. Math. Soc. 361 (2009), no. 8, 4091-4124. 


\bibitem[Ol]{Ol} P. Olum, {\it Mappings of manifolds and the notion of
degree}, Ann. of Math. 58 (1953), 458-480.




\bibitem[ScW]{ScW} P. Scott, C. T. C. Wall: {\it Topological methods in group theory.
Homological group theory}, (Proc. Sympos., Durham (1979), 137--203),
London Math. Soc. Lecture Note Ser. 36, Cambridge Univ. Press,
Cambridge-New York, 1979.


\bibitem[SW]{SW} H. Sun,  Z. Wang, {\it  Minimum Euler Characteristics of 4-manifolds with 3-manifold groups.} arXiv:2103.10273 [math.GT], accepted by Science China Mathematics.









\end{thebibliography}
\end{document}